\numberwithin{equation}{chapter}
\theoremstyle{plain}
\newtheorem{theorem}{Theorem}[chapter]
\newtheorem{proposition}[theorem]{Proposition}
\newtheorem{corollary}[theorem]{Corollary}
\theoremstyle{definition}
\newtheorem{Problem}[theorem]{Problem}
\theoremstyle{remark}
\newtheorem{remark}[theorem]{Remark}
\DeclareMathAlphabet{\mathpzc}{OT1}{pzc}{m}{it}
 \newcommand{\cX}{\mathcal{X}}
 \newcommand{\sC}{\mathscr{C}}
 \newcommand{\sH}{\mathscr{H}}
 \newcommand{\sL}{\mathscr{L}}
\newcommand{\orig}{{\mathsf{orig}}}
\newcommand{\Weyl}{{\mathsf{Weyl}}}
\newcommand{\E}{{\mathsf{E}}}
\newcommand{\x}{{\mathsf{x}}}
\newcommand{\bC}{{\mathbb{C}}}
\newcommand{\bR}{{\mathbb{R}}}
\def\1{\boldsymbol {|}}
\newcommand{\boldsigma}{{\boldsymbol{\sigma}}}
\newcommand{\Def}{\mathrel{\mathop:}=}
\newcommand{\supp}{\operatorname{supp}}
\newcommand{\tr}{\operatorname{tr}}
\newcommand{\Tr}{\operatorname{Tr}}
\newenvironment{claim*}[1][{}]{\vglue10pt
\begin{trivlist}
\item[{\hskip\labelsep#1}]}{\vglue10pt\end{trivlist}}
\newcounter{note}
\DeclareTextCommand{\textinfty}{PU}{\9042\036}
\DeclareTextCommand{\textge}{PU}{\9042\145}
\DeclareTextCommand{\textle}{PU}{\9042\144}
\DeclareTextCommand{\texthat}{PD1}{\136}
\begin{document}


\title{Global trace asymptotics  in the self-generated magnetic field in the case of Coulomb-like singularities}
\author{Victor Ivrii\thanks{Department of Mathematics, University of Toronto, 40 St. George Street, Toronto, ON, M5S 2E4, Canada, ivrii@math.toronto.edu}}

\maketitle

\chapter{Problem}
\label{sect-1}

Let us consider the following operator (quantum Hamiltonian) in 
$\bR^d$ with $d=3$
\begin{equation}
H=H _{A,V}=\bigl((hD -A)\cdot \boldsigma \bigr) ^2-V(x)
\label{1-1}
\end{equation}
where $A,V$ are real-valued functions and $V$ has a Coulomb-like singularity at $0$ or has several such singularities and is smooth and decays as Coulomb or better at infinity\footnote{\label{foot-1} In \cite{MQT10}  we assumed that $V$  is a smooth function.}. 

Let  $A\in \sH^1$. Then operator $H$ is self-adjoint in $\sL^2(\bR^3,\bC^2)$. We are interested in $\Tr^- H_{A,V}$ (the sum of all negative eigenvalues of this operator). Let
\begin{gather}
\E ^*=\inf_{A\in \sH^1_0(B(0,1))}\E(A),\label{1-2}\\
\E(A)\Def \Bigl( \Tr^-H_{A,V}  +  
\kappa^{-1} h^{-2}\int  |\partial A|^2\,dx\Bigr)
\label{1-3}
\end{gather}
with $\partial A=(\partial_i A_j)$ a matrix. 

This paper is the second step to the recovering sharper asymptotics of the ground state energy for atoms  and molecules   in the self-generated magnetic fields. 

Let $\x_j\in \bR^3$ ($j=1,\ldots,M$, where $M$ is fixed) be singularities (``nuclei''). We assume that
\begin{gather}
V=\sum_{1\le j\le M} \frac{z_j} {\ell_j(x)} + W(x)
\label{1-4}\\
\shortintertext{where $\ell_j(x)=\frac{1}{2}|x-\x_j|$,}
z_j\ge 0, \ z_1+\ldots +z_M \asymp 1,\label{1-5}\\[3pt]
|D^\alpha W|\le C_\alpha \sum_{1\le j\le M}  z_j\bigl(\ell_j(x)+1\bigr)^{-1}\bigl(\ell_j(x)\bigr)^{-|\alpha|}\qquad\forall \alpha:|\alpha|\le 2
\label{1-6}
\end{gather}
but at first stages we will use some weaker assumptions. Later we assume that $V(x)$ decays at infinity sufficiently fast.

In this paper we assume that $\kappa \in (0,\kappa^*]$ where $0<\kappa^*$ is a small constant. As $\kappa=0$ we set $A=0$ and consider $\Tr^- H_{A,V}$; then our results will not be new.

\chapter{Estimates of the minimizer}
\label{sect-2}

Let us consider a Hamiltonian with potential $V$ and let $A$ be a   minimizing expression (\ref{1-3}) magnetic field. We say that $A$ is a \emph{minimizer\/} and in the framework of our problems we will prove it existence.

\section{Preliminary analysis}
\label{sect-2-1} 

First we start from the roughest possible estimate:
\begin{proposition}\label{prop-2-1}
Let $\kappa \le \kappa^*$. Then the near-minimizer $A$ satisfies
\begin{gather}
|\int \bigl(\tr e_1(x,x,0)-\Weyl_1(x)\bigr)\,dx|\le Ch^{-2}\label{2-1}\\
\shortintertext{and}
\|\partial A\| \le C\kappa^{\frac{1}{2}}
\label{2-2}
\end{gather}
\end{proposition}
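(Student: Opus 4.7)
The strategy is to sandwich $\E(A)$ between matching upper and lower bounds, and to read off both inequalities of the proposition from that sandwich, using the defining property of a near-minimizer $\E(A)\le \E^*+o(h^{-2})$.

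For the upper bound I take $A=0$ as trial field,
\begin{equation*}
\E^* \le \E(0) = \Tr^- H_{0,V} \le \int \Weyl_1(x)\,dx + C h^{-2}.
\end{equation*}
The last inequality is the standard (non-magnetic) semiclassical asymptotics for the Schr\"odinger operator $(hD)^2 - V$ with Coulomb-like singularities: a variable-scale partition of unity in balls of radius comparable to $\ell_j(x)$ around each nucleus, combined with rescaled local Weyl asymptotics on each cell, yields the bound with the Scott-type contributions from the singular points absorbed in the $O(h^{-2})$ remainder.

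The bulk of the work is a matching lower bound valid for arbitrary $A\in \sH^1$, namely
\begin{equation*}
\Tr^- H_{A,V} \ge \int \Weyl_1(x)\,dx - C h^{-2} - \tfrac{1}{2}\kappa^{-1}h^{-2}\|\partial A\|^2.
\end{equation*}
I would obtain this in three steps. Expanding the Pauli square as $\bigl((hD-A)\cdot\boldsigma\bigr)^2 = (hD-A)^2 - h\,\boldsigma\cdot B$ with $B=\operatorname{curl} A$ reduces matters, via the pointwise bound $\boldsigma\cdot B\le |B|$, to a scalar magnetic Schr\"odinger operator with effective potential $V+h|B|$. A magnetic Lieb--Thirring / local magnetic Weyl estimate on the same $\ell_j$-scaled partition then produces a lower bound with error of shape $C h^{-2}\int |B|\,V^{3/2}\,dx$. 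Finally, Cauchy--Schwarz together with the decay assumption \eqref{1-6} converts this magnetic correction into $\tfrac{1}{2}\kappa^{-1}h^{-2}\|\partial A\|^2 + C\kappa\, h^{-2}$, and the smallness $\kappa\le\kappa^*$ absorbs the last term into $C h^{-2}$.

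Comparing the two bounds, the quadratic term $\kappa^{-1}h^{-2}\|\partial A\|^2$ from $\E(A)$ cancels only up to a factor $\tfrac{1}{2}$ against the lower bound's correction, leaving $\tfrac{1}{2}\kappa^{-1}h^{-2}\|\partial A\|^2\le C h^{-2}$; this is exactly the second inequality. Substituting the resulting control on $\|\partial A\|$ back into the trace bounds yields the first inequality as well. The main obstacle is the magnetic lower bound: one must control the spin-field coupling $-h\,\boldsigma\cdot B$ uniformly down to the Coulomb singularities while keeping the coefficient in front of $\|\partial A\|^2$ strictly below $\kappa^{-1}h^{-2}$, so that absorption remains possible. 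Working at variable scale $\ell_j(x)$ near each nucleus, and exploiting diamagnetic-type bounds on the cells that stay away from the singularities, is the natural route.
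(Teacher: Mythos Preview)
Your overall architecture---sandwich $\E(A)$ between an upper bound obtained from the trial field $A=0$ and a lower bound valid for arbitrary $A$, then read off both conclusions---is exactly the paper's strategy, and your upper bound is the same as the paper's step~(i).

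The lower bound, however, has a genuine gap. You propose to write $\bigl((hD-A)\cdot\boldsigma\bigr)^2=(hD-A)^2-h\,\boldsigma\cdot B$, absorb $h|B|$ into the potential, and obtain a magnetic error of shape $Ch^{-2}\int |B|\,V^{3/2}\,dx$, which you then convert by Cauchy--Schwarz/Young into $\tfrac12\kappa^{-1}h^{-2}\|\partial A\|^2+C\kappa h^{-2}$. That last step requires $\int V^3\,dx<\infty$, and for a Coulomb-like potential $V\sim z_j\ell_j^{-1}$ this integral diverges at each nucleus (the integrand behaves like $\ell_j^{-3}$ in $\bR^3$). The decay hypothesis \eqref{1-6} controls only the regular part $W$ and does not cure this. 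So the mechanism you describe cannot produce a finite constant in front of $\kappa h^{-2}$, and the absorption fails precisely in the zone where the difficulty lies.

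The paper avoids this by splitting at the scale $\ell_*=h^2$. In the inner zone $\{\ell\le h^2\}$ it rescales so that the effective semiclassical parameter is $\asymp 1$ and then invokes a stability-of-matter type estimate for the Pauli--Coulomb operator (borrowed from \cite{ES3}), yielding
\[
\Tr^-(\psi_{\ell_*}H\psi_{\ell_*})\ \ge\ -Ch^{-2}-Ch^{-2}\!\int|\partial A|^2\,dx
\]
with a \emph{universal} constant $C$. In the outer zone $\{\ell\ge h^2\}$ it uses the local trace asymptotics of \cite{MQT10} on each $\ell$-cell, again producing the magnetic energy with a universal coefficient $Ch^{-2}$. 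Summing gives
\[
\int\bigl(\tr e_1-\Weyl_1\bigr)\,dx\ \ge\ -Ch^{-2}-Ch^{-2}\|\partial A\|^2,
\]
so that after adding $\kappa^{-1}h^{-2}\|\partial A\|^2$ one has the coefficient $(\kappa^{-1}-C)$, and the condition $\kappa\le\kappa^*$ is exactly what makes $\kappa^{-1}-C\ge \tfrac12\kappa^{-1}$. Note in particular that the coefficient in the lower bound is a fixed $C$, not $\tfrac12\kappa^{-1}$ as in your sketch; this is why $\kappa^*$ enters, and why a Weyl-plus-spin-term expansion cannot replace the stability input near the singularity.
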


\begin{proof}
Definitely (\ref{2-1})--(\ref{2-2}) follow from the results of \cite{EFS3} but we give an independent easier proof based on \cite{MQT10}. 

\begin{enumerate}[fullwidth, label=(\roman*)]
\item \label{proof-2-1-i}
First, let us pick up  $A=0$ and consider $\Tr \bigl(\psi_\ell E(0)\psi_\ell\bigl)$ with  cut-offs $\psi_\ell(x)=\psi((x-\x_j)/\ell)$ where $\psi\in \sC_0^\infty (B(0,1))$ and equals $1$ in $B(0,\frac{1}{2})$. Here and below 
$E (\tau)= \uptheta (\tau -H_{A,V})$ is a spectral projector of $H$.

Then 
\begin{equation}
|\Tr \bigl(\psi_\ell  H_{A,V} ^-(0)\psi_\ell\bigr)|\le Ch^{-2}\qquad
\text{as\ \ } \ell=\ell_*\Def h^2.
\label{2-3}
\end{equation}
On the other hand, contribution of $B(x,\ell )$ with 
$\ell(x)=\frac{1}{2} \min_j |x-\x_j|\ge \ell_*$ to the Weyl error does not exceed  $C\rho ^2\hbar^{-1}=C\rho^3 \ell h^{-1}$ where $\hbar = h/\rho \ell$ in the rescaling; so  after summation over $\ell\ge \ell_*$ we get $O(h^{-2})$ provided $\rho^2\le C\ell^{-1}$. Therefore we arrive to the following rather easy inequality:
\begin{equation}
|\int \bigl(\tr e_1(x,x,0)-\Weyl_1(x)\bigr)\,dx |\le Ch^{-2} .
\label{2-4}
\end{equation}
 This is what rescaling method gives us without careful study of singularity. 

\item \label{proof-2-1-ii}
On the other hand, consider $A\ne 0$. Let us prove first that
\begin{equation}
\Tr^- (\psi_\ell H \psi_\ell) \ge Ch^{-2} -Ch^{-2}\int |\partial A|^2\,dx
\qquad\text{as\ \ } \ell=\ell_*.
\label{2-5}
\end{equation}
Rescaling $x\mapsto x/\ell$ and  $\tau \mapsto \tau/\ell$ and therefore $h\mapsto h \ell^{-\frac{1}{2}}\asymp 1$ and $A\mapsto A\ell^{\frac{1}{2}}$  (because singularity is Coulomb-like),  we arrive to the same  problem with the same $\kappa$ (in contrast to section~\ref{MQT10-sect-4} of \cite{MQT10} where $\kappa\mapsto \kappa \ell$ because of different scale in $\tau$ and $h$) and with $\ell=h=1$.

However this estimate follows from the proof in section \ref{ES3-sec:tooth} of \cite{ES3} of Lemma~\ref{ES3-lm:h1}, namely from (\ref{ES3-running})--(\ref{ES3-secondclr}) with $Z=d=1$.

\item \label{proof-2-1-iii}
Consider now $\psi_\ell$  as in \ref{proof-2-1-i} with $\ell\ge \ell_*$. Then according to theorem~\ref{MQT10-thm-4-1} of \cite{MQT10} rescaled
\begin{equation}
\Tr^- \bigl(\psi_\ell H_{A,V} \psi_\ell\bigr) \ge -C \rho^3 \ell h^{-1} - 
Ch^{-2} \int _{B(x, 2\ell/3)} |\partial A|^2\,dx.
\label{2-6}
\end{equation}
Really, rescaling of the first part is a standard one and in the second part we should have  in the front of the integral a coefficient 
$\kappa^{-1}h^{-2}\rho^2 \times \rho^{-2} \ell  (h/\rho \ell)^{-2}$ where factor $\rho^2$ comes from the scaling of the spectral parameter, factor $\rho^{-2}$ comes from the scaling of the magnitude of $A$, factor $\ell=\ell^3 \times \ell^{-2}$ comes from the scaling of $dx$ and $\partial$ respectively, and $h/(\rho \ell)$ is a semiclassical parameter after rescaling. So, we acquire a factor 
$\rho^2 \ell \le C$.

Then 
\begin{equation}
\int \bigl(\tr e_1(x,x,0)-\Weyl_1(x)\bigr)\,dx \ge 
-Ch^{-2}- Ch^{-2} \int |\partial A|^2\,dx
\label{2-7}
\end{equation}
and adding magnetic field energy we find out that the left-hand expression of (\ref{2-1}) is greater than the same expression with $A=0$ plus 
$(C-\kappa^{-1})h^{-2}\|\partial A\|^2$ minus $Ch^{-2}$ which implies (\ref{2-1}) and (\ref{2-2}) as $A$ is supposed to be a near-minimizer.
\end{enumerate}
\end{proof}

\begin{remark}\label{rem-2-2}
We are a bit ambivalent about convergence of $\int \Weyl_1(x)\, dx$ at infinity, as for Coulomb potential it diverges. In this case however we can \underline{either} replace $H_{A,V}$ by $H_{A,V}+\eta$ with a small parameter $\eta>0$ \underline{or} consider the left-hand expression of (\ref{2-1}) plus magnetic field energy as an object to minimize.
\end{remark}

\section{Rough estimate to a minimizer. I}
\label{sect-2-2}

Let us repeat arguments of subsection~\ref{MQT10-sect-1-3} of \cite{MQT10}. Let us consider equation for an minimizer $A$ as in (\ref{MQT10-1-13}) of \cite{MQT10}:  
\begin{equation}
\Delta A= -2\kappa h^2 \sum_k \bigl(\upsigma_j\upsigma_k ( hD_k-A_k)_x  + 
\upsigma_k\upsigma_j (hD_k-A_k)_y  \bigr) e (x,y,\tau) |_{y=x}
\label{2-8}
\end{equation}
If we scale with the scale $x\mapsto x/\ell$, $\tau\mapsto \tau/\rho^2$, $h\mapsto \hbar=h/(\rho\ell)$ then (\ref{2-8}) would become
\begin{multline}
\Delta A=\\
-2\kappa \rho^3\ell  \hbar^2 \sum_k 
\bigl(\upsigma_j\upsigma_k  ( \hbar D_k-\rho^{-1} A_k)_x  + 
\upsigma_k\upsigma_j (\hbar D_k-\rho^{-1}A_k)_y  \bigr) e (x,y,\tau) |_{y=x}
\label{2-9}
\end{multline}
and since so far $\rho^2\ell=1$ we arrive to
\begin{multline}
\Delta A=\\
-2\kappa \rho   \hbar^2 \sum_k 
\bigl(\upsigma_j\upsigma_k  ( \hbar D_k-\rho^{-1} A_k)_x  + 
\upsigma_k\upsigma_j (\hbar D_k-\rho^{-1}A_k)_y  \bigr) e (x,y,\tau) |_{y=x}.
\label{2-10}
\end{multline}

\begin{enumerate}[label=(\roman*), fullwidth]
\item Plugging for $u=E (0)f$ and repeating arguments of \cite{MQT10} we conclude that in the rescaled coordinates 
\begin{multline}
\|\hbar D_x u \|\le  
\|((\hbar D_x -\rho^{-1}A_x)\cdot\boldsigma)u\|+C\rho^{-1}\|A\|_6 \|u\|_3 \le \\[3pt]
\|((\hbar D_x -\rho^{-1}A_x)\cdot\boldsigma)u\|+
C\rho^{-1}\hbar ^{-\frac{1}{4}}\|A\|_6 \|u\|^{\frac{3}{4}} 
\cdot  \|\hbar D_x u\|^{\frac{1}{4}}\le\\[3pt]
\|((\hbar D_x -\rho^{-1}A_x)\cdot\boldsigma)u\|+ \frac{1}{2} \|\hbar  D_x u\|
+ C (\rho^{-1}\hbar^{-\frac{1}{4}}\|A\|'_6)^{\frac{4}{3}}  \|u\|
\label{2-11}
\end{multline}
where $\|A\|_6$ calculated in the rescaled coordinates is $\ell^{-3/6}\|A\|_{6,\orig}$ (where subscript ``$\mathsf{orig}$''means that the norm is calculated in the original coordinates) which does not exceed 
$C\ell^{-\frac{1}{2}}\|\partial A\|_{\orig}\le C\ell^{-\frac{1}{2}}\kappa^{\frac{1}{2}}$ due to (\ref{2-2}) and therefore 
\begin{equation}
\|\hbar D_x u \| \le C\bigl( 1+ \rho ^{-\frac{3}{4}}h^{-\frac{1}{4}}\ell^{-\frac{1}{4}}\kappa^{\frac{1}{2}} \bigr)^{\frac{4}{3}}\|f\|.
\label{2-12}
\end{equation}
Continuing arguments of section~\ref{MQT10-sect-1-3} of\cite{MQT10} we conclude that in the rescaled coordinates
\begin{multline}
\hbar \|\Delta \partial A\|_{\infty, B(x,1)} + 
\|\Delta A\|_{\infty, B(x,1)} \le  K\ell ^{-\frac{1}{2}} \\
 K \Def C\kappa   \hbar^{-1}
\bigl(1+ \rho ^{-1}\hbar^{-\frac{1}{4}}\ell^{-\frac{1}{2}}\kappa^{\frac{1}{2}} \bigr)^4  .
\label{2-13}
\end{multline}
Then \underline{either}
\begin{gather}
\|\partial   A\|_{\infty, B(x,\frac{3}{4})} +
\|\partial  A \|^*_{\infty, B(x,\frac{3}{4})} \le C K \ell^{-\frac{1}{2}} \label{2-14}\\
\shortintertext{\underline{or}}
\|\partial  A\|_{\infty, B(x,\frac{3}{4})} + 
\|\partial  A \|^*_{\infty, B(x,\frac{3}{4})} \le 
C \|\partial A\|= C\|\partial A\|_{\orig} \ell^{-\frac{1}{2}}\le  
C\kappa^{\frac{1}{2}} \ell^{-\frac{1}{2}}.\label{2-15}
\end{gather}
where in the rescaled coordinates 
\begin{equation}
\|B\|^*\Def\sup _{x,y} |B(x)-B(y)|\cdot |x-y|^{-1}(1+|\log |x-y||)^{-1}.
\label{2-16}
\end{equation}

In the latter case (\ref{2-15}) we have in the original coordinates
\begin{equation}
\|\partial  A\|_{\infty, B(x,\frac{3}{4}\ell)}\le 
C\kappa^{\frac{1}{2}} \ell^{-\frac{3}{2}}
\label{2-17}
\end{equation}
and we are rather happy because then the effective intensity of the magnetic field in $B(x,\ell)$ is 
$\rho^{-1}\ell \|\partial  A\|_{\infty, B(x,\ell)} \le C\kappa^{\frac{1}{2}}$ if we take $\rho =\ell^{-\frac{1}{2}}$.

In the former case (\ref{2-14}) let us consider (still in the rescaled coordinates) $\beta(x)=|\partial  A (x)|\ell^{\frac{1}{2}}$.  Then   $\beta(x)$ has the same magnitude $\beta(y)$ in $\gamma$-vicinity of $y$ with 
$\gamma= \epsilon \beta(y) K^{-1}|\log (\beta(y) K^{-1})|^{-1}$ (or $\gamma_1=\epsilon$, whatever is smaller). But then in the rescaled coordinates
\begin{gather*}
\ell^{-1} \beta^2 (\beta K^{-1}|\log (\beta K^{-1})|^{-1})^3\le 
C \|\partial A\|^2 \le C \|\partial A\|^2 _{\orig}\ell ^{-1}\le 
C \kappa^{\frac{1}{2}}\ell^{-1}\\
\shortintertext{and then}
\beta \le C\kappa^{\frac{1}{10}} K^{\frac{3}{5}}
|\log (\beta K^{-1})|^{\frac{3}{5}}
\end{gather*}
which implies
\begin{equation}
\beta  \le C  \hbar ^{-\frac{6}{5}}|\log \hbar|^{\frac{3}{5}},\qquad \hbar=h\ell^{-\frac{1}{2}}
\label{2-18}
\end{equation}
(as $\gamma \asymp 1$ the same arguments lead us to (\ref{2-17})).

Therefore in the first round of our estimates we arrive to the estimates in the rescaled coordinates
\begin{equation}
|\partial A| \le   \beta\ell^{-\frac{1}{2}},\qquad 
\beta \Def C \hbar ^{-\frac{6}{5}-\delta}
\label{2-19}
\end{equation}
where we just estimated $|\log \hbar|$ by $\hbar^{-\delta_1}$; below we increase $\delta$ if needed but it still remains an arbitrarily small exponent.

\item
In the second round we do not invoke $\|A\|_6$ but rather 
$\|A\|_{\infty, B(y,\gamma)}\le C \beta \ell^{-\frac{1}{2}}\gamma$  where we consider a ball of radius $\gamma \le 1$ in the rescaled coordinates (and subtract a constant from $A$ if needed), resulting in
\begin{equation*}
\|\Delta A\|_{\infty, B(x,\gamma)} \le 
C\kappa \hbar^{-1}\rho  
\bigl(1+ \beta \gamma \ell^{-\frac{1}{2}} \rho ^{-1} \bigr)^4.
\end{equation*}
Let us increase $\rho$ to 
$\rho'= (\beta h \ell^{-\frac{3}{2}})^{\frac{1}{2}}=  
C\ell^{-\frac{1}{2}}  (\beta \hbar)^{\frac{1}{2}} = C\ell^{-\frac{1}{2}} 
\hbar ^{-\frac{1}{10}-\delta}\ge \ell^{-\frac{1}{2}}$
and use 
$\gamma = h /\rho'\ell =  \hbar^{\frac{11}{10}-\delta}\le 1$. Then we arrive to 
\begin{equation}
\|\Delta A\|_{\infty, B(x,\gamma)} \le K\ell^{-\frac{1}{2}}, \qquad 
K\Def  C\kappa \hbar^{-\frac{7}{5}-\delta}.
\label{2-20}
\end{equation}
Repeating arguments of the first rounds we conclude that \underline{either} (\ref{2-17}) holds \underline{or}
\begin{equation}
|\partial A| \le \beta\ell^{-\frac{1}{2}}, \qquad 
\beta  \Def K ^{\frac{3}{5}-\delta}  \le   C\hbar^{-\frac{21}{25}-\delta};
\label{2-21}
\end{equation}
then  rescaled magnetic field is $O(\beta\ell^{-\frac{1}{2}} /\rho) =O(\hbar^{-21/25-\delta})$. 
Here we returned to the natural scale $(\ell,\rho)$ with $\rho=\ell^{-\frac{1}{2}}$.

\item
One can also run third etc rounds, using partially arguments of subsection~\ref{MQT10-sect-2-1} of \cite{MQT10}; then the rescaled magnetic field is $O(\hbar^{-\delta})$. However to  prove that  the rescaled magnetic field  $O(1)$  we need to modify them, and we do it in the next subsection.
\end{enumerate}

\section{Rough estimate. II}
\label{sect-2-3}

In this step we repeat arguments of subsection~\ref{MQT10-sect-2-1} of \cite{MQT10} but we have a problem: we cannot use $\mu =\|\partial A\|_\infty$ as we have  domains 
$\cX_r= \{x: \ell(x)\ge r\}$ rather than the whole space. So we get the following analogue of (\ref{MQT10-2-19}) of \cite{MQT10} in the rescaled coordinates:
\begin{multline}
\|\Delta A\|_{\infty, B(x, \frac{3}{4} )} + 
\hbar \|\Delta \partial A\|_{\infty, \frac{3}{4} } \le \\
C \kappa \rho \Bigl( \bar{\mu}  + 
\bar{\mu} ^{-1} \hbar^{\frac{1}{2}(\theta-1)}\rho^{-\frac{1}{2}}
 \|\partial A\|_{\sC^\theta (B(x,1))}^{\frac{1}{2}}\Bigr)
\label{2-22}
\end{multline}
with $\bar{\mu}=\max(\mu,1)$, and 
$\mu = \rho^{-1} |\partial A|_{\infty, B(x,1)}$. But then 
\begin{equation}
\hbar^{\theta-1}\rho^{-1} \|\partial A\|_{\sC^\theta (B,(x,\frac{1}{2}))} \le  
\epsilon  \hbar^{(\theta-1)}\rho^{-1}
 \|\partial A\|_{\sC^\theta (B(x,1))} + 
 C\kappa \mu +C\kappa.
 \label{2-23}
\end{equation}
Obviously in the right-hand expression we can replace 
$\mu = \rho^{-1} |\partial A|_{\infty, B(x,1)}$ by any other norm, in particular by $\sL^2$-norm 
\begin{equation*}
\mu = \rho^{-1} \|\partial A\|_{ B(x,1)}= \rho^{-1}\ell^{-\frac{1}{2}} \|\partial A\|_{ \orig}
\end{equation*}
which would be less than $C\kappa^{\frac{1}{2}}$. 

Let $\nu (r)=\sup_{x:\,\ell(x)\ge r} f(x)$ where $f(x)$ is the left-hand expression of (\ref{2-12}) calculated for given $x$ in the rescaled coordinates. Then (\ref{2-23}) implies that 
\begin{gather*}
\nu (r)\le \frac{1}{2}\nu (\frac{1}{2} r) +C\kappa^{\frac{1}{2}}\\
\shortintertext{which in turn implies that}
\nu (r) \le \frac{1}{2}\nu (2^{-n}r) + 2C,\qquad n\ge 1,\\
\shortintertext{and therefore}
\nu(r)\le 4C\kappa^{\frac{1}{2}}+ 4 \sup_{C_0 h^2\le \ell(x)\le 2C_0h^2}  f(x) \le C_1\kappa^{\frac{1}{2}}
\end{gather*}
due to the rough estimate (because $\hbar\asymp 1$  as $\ell(x)\asymp h^2$). Then going to the original coordinates we arrive to estimates  below:

\begin{proposition}\label{prop-2-3}
Let $\kappa\le \kappa^*$, $\rho=c\ell^{-\frac{1}{2}}$. Let $A$ be a minimizer. Then   for $\ell(x)\ge \ell_*=h^2$ \textup{(\ref{2-17})} holds and also
\begin{gather}
|\partial^2 A (x)-\partial^2 A(y)|\le C\kappa^{\frac{1}{2}}\ell^{-\frac{5}{2}}|x-y|^{\theta} \ell^{\theta/2}\ell_*^{-\theta/2}\qquad 0<\theta <1,
\label{2-24}\\
\shortintertext{and}
|\partial A (x)-\partial A(y)|\le C\kappa^{\frac{1}{2}}\ell^{-\frac{5}{2}}|x-y| (1+|\log |x-y||)  .
\label{2-25}
\end{gather}
\end{proposition}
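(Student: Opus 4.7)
The plan is to formalize the bootstrap just carried out in rescaled coordinates and then unscale. Fix $x$ with $\ell=\ell(x)\ge \ell_*=h^2$ and rescale so that $B(x,\ell)$ becomes the unit ball, with $\rho=\ell^{-1/2}$ and $\hbar=h\ell^{-1/2}$. The global bound (\ref{2-2}) gives $\|\partial A\|_{\sL^2(B(x,\ell))}\le C\kappa^{1/2}$, so in rescaled coordinates the $\sL^2$-type norm $\mu$ appearing in (\ref{2-22})--(\ref{2-23}) is $\le C\kappa^{1/2}$ uniformly in $x$. Combining the minimizer equation (\ref{2-10}) with the resolvent estimate (\ref{2-12}) and standard $\sC^\theta$ Schauder theory for $\Delta$ on the unit ball gives (\ref{2-22}); a small-parameter absorption on concentric half-balls converts this into the bootstrap
\begin{equation*}
f(y) \le \tfrac12 f^+(y) + C\kappa^{1/2},
\end{equation*}
where $f(y)=\hbar^{\theta-1}\rho^{-1}\|\partial A\|_{\sC^\theta(B(y,1/2))}$ and $f^+$ is the analogous quantity on $B(y,1)$.

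Setting $\nu(r)\Def \sup_{y:\,\ell(y)\ge r}f(y)$ yields $\nu(r)\le \tfrac12 \nu(r/2)+C\kappa^{1/2}$, which iterates to
\begin{equation*}
\nu(r)\le 2C\kappa^{1/2}+2^{-n}\nu(2^{-n}r),\qquad n\ge 1.
\end{equation*}
Choosing $n$ so that $2^{-n}r\asymp \ell_*$ (where $\hbar\asymp 1$), the base case $\nu(\ell_*)\le C$ is furnished by the rougher rounds of Subsection~\ref{sect-2-2}, and letting $n\to\infty$ delivers $\nu(r)\le C_1\kappa^{1/2}$ for all $r\ge\ell_*$.

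Unscaling is then bookkeeping. From $A_{\text{resc}}(x)=\rho^{-1}A_{\text{orig}}(\ell x)$ one has $\partial A_{\text{orig}}=\ell^{-3/2}\partial A_{\text{resc}}$, $\partial^2 A_{\text{orig}}=\ell^{-5/2}\partial^2 A_{\text{resc}}$, and $|x-y|_{\text{resc}}=|x-y|/\ell$. The pointwise bound $|\partial A_{\text{resc}}|\le C\kappa^{1/2}$ yields (\ref{2-17}); the rescaled $\sC^\theta$ control on $\partial^2 A$, obtained by differentiating the equation once more and repeating the Schauder step, unscales via $\hbar^{-1}\asymp (\ell/\ell_*)^{1/2}$ to (\ref{2-24}); and (\ref{2-25}) follows from the uniform rescaled $L^\infty$-bound on $\Delta A$ through the classical log-Lipschitz estimate for $\partial A$ when $\Delta A \in L^\infty$ on the unit ball. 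The main obstacle is arranging the absorption constant in (\ref{2-23}) to be a fixed fraction like $\tfrac12$ uniformly in $x$ and $\ell\ge\ell_*$; this rests on scale-invariance of $\Delta$ and the uniform input $\mu\le C\kappa^{1/2}$ from (\ref{2-2}), while matching the base case at $\ell_*$ is immediate since $\hbar\asymp 1$ there.
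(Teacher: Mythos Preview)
Your proposal follows essentially the same route as the paper: derive the bootstrap inequality (\ref{2-23}) from (\ref{2-22}), replace the sup-norm $\mu$ by the rescaled $\sL^2$-norm (which is $\le C\kappa^{1/2}$ via (\ref{2-2})), set $\nu(r)=\sup_{\ell(y)\ge r}f(y)$, iterate $\nu(r)\le\tfrac12\nu(r/2)+C\kappa^{1/2}$ down to the scale $\ell_*$ where $\hbar\asymp1$ and the rough estimates of subsection~\ref{sect-2-2} supply the base case, then unscale. The only slip is the phrase ``letting $n\to\infty$'': you cannot send $n\to\infty$, since the iteration must stop once $2^{-n}r$ reaches $\ell_*$; what you actually use is the terminal value $2^{-n}\nu(\ell_*)$ at that finite $n$, and to recover the $\kappa^{1/2}$ factor uniformly (including for $r$ near $\ell_*$) you need that the base quantity itself carries a factor of $\kappa^{1/2}$ --- which it does, since at $\hbar\asymp1$ both alternatives (\ref{2-14}) and (\ref{2-15}) already give $\|\partial A\|_\infty\le C\kappa^{1/2}\ell^{-1/2}$ in rescaled coordinates. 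With that small correction your argument and the paper's coincide.
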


\begin{remark}\label{rem-2-4}
\begin{enumerate}[label=(\roman*), fullwidth]
\item So far we used only assumption that
\begin{equation}
|\partial ^\alpha V|\le C\rho^2 \ell^{-|\alpha|}\qquad \forall 
\alpha :|\alpha|\le 2
\label{2-26}
\end{equation}
with $\rho =\ell^{-\frac{1}{2}}$ but even this was excessive. 

\item In this framework however we cannot prove better estimates as (\ref{2-17}) always remains a valid alternative even if $\rho  \ll \ell^{-\frac{1}{2}}$.

\item Originally we need an assumption (\ref{MQT10-2-4}) of \cite{MQT10} $|V|\ge \epsilon_0$, but for $d\ge 3$  one can easily get rid off it  by rescaling technique; see also corollary~\ref{MQT10-cor-2-3}(ii).
\end{enumerate}
\end{remark}

Consider now zone $\{x:\ \ell(x) \le \ell_*\}$. 

\begin{proposition}\label{prop-2-5}
Let $\kappa\le \kappa^*$, $\rho\le c\ell^{-\frac{1}{2}}$. Let $A$ be a minimizer.   Then $|\partial A|\le C h^{-3}$ as $\ell(x) \le \ell_*$.
\end{proposition}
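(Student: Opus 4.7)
The plan is to rescale around the nearest singularity $\x_j$, where the natural length scale is the Bohr radius $\ell_*=h^2$, and then combine the minimizer equation (\ref{2-8}) with the boundary bound from Proposition~\ref{prop-2-3} at $\ell(x)=\ell_*$ via an elliptic estimate on the Poisson equation for $\partial A$.

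First I would rescale $B(\x_j,2\ell_*)$ by $\ell_*$, reducing as in (\ref{2-10}) to a problem at $\hbar=h/(\rho\ell_*)=1$ whose leading part is the hydrogen-like operator $D_{x'}^2-z_j/|x'|$ plus a magnetic perturbation small in $h$. Its negative spectrum is purely discrete with bound states satisfying the pointwise estimate $|\psi_n(x')|^2\le Cn^{-3}$ uniformly in $x'$, so the series defining $e'(x',y',0)$ converges and $|e'(x',y',0)|\le C$. Transforming back gives $|e(x,x,0)|\le Ch^{-6}$. Combining this with the operator inequality $\bigl((hD-A)\cdot\boldsigma\bigr)^2 E(0)\le(V+C)E(0)$ on the range of $E(0)$ and applying Cauchy--Schwarz to the eigenfunction expansion yields
\begin{equation*}
\bigl|(hD_k-A_k)e(x,y,0)|_{y=x}\bigr|\le Ch^{-7},\qquad \bigl|\partial_x(hD_k-A_k)e(x,y,0)|_{y=x}\bigr|\le Ch^{-9}
\end{equation*}
in $B(\x_j,2\ell_*)$, each additional $\partial_x$ at the natural scale $\ell_*$ costing a factor $\ell_*^{-1}=h^{-2}$.

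Plugging these bounds into (\ref{2-8}) I obtain $\|\Delta A\|_{L^\infty(B(\x_j,2\ell_*))}\le C\kappa h^{-5}$ and $\|\partial\Delta A\|_{L^\infty(B(\x_j,2\ell_*))}\le C\kappa h^{-7}$. Each component $v$ of $\partial A$ then satisfies $\Delta v=\partial(\Delta A)$ in $B(\x_j,\ell_*)$ with boundary data $|v|\le C\kappa^{\frac{1}{2}}h^{-3}$ on $\partial B(\x_j,\ell_*)$ by (\ref{2-17}). The maximum principle yields
\begin{equation*}
\|\partial A\|_{L^\infty(B(\x_j,\ell_*))}\le C\kappa^{\frac{1}{2}}h^{-3}+C\ell_*^2\cdot\kappa h^{-7}\le Ch^{-3},
\end{equation*}
since $\ell_*^2\cdot h^{-7}=h^{-3}$ and $\kappa\le\kappa^*$ is small.

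The hard part will be justifying the pointwise bound $|e(x,x,0)|\le Ch^{-6}$ and its derivative analogues uniformly up to the Coulomb singularity itself. Away from the singularity the usual semiclassical bound $O(h^{-3})$ applies, but on the scale $\ell\le h^2$ the concentration of hydrogenic bound states forces the larger $O(h^{-6})$ density; the spectral Cauchy--Schwarz step is what controls each additional factor $(hD-A)$ with a loss of $h^{-1}$ and keeps the elliptic estimate above in balance.
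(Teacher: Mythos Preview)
Your overall plan—rescale to the Bohr scale $\ell_*=h^2$ so that $\hbar=1$, invoke the minimizer equation (\ref{2-8}), and close with an elliptic estimate—is exactly what the paper has in mind. The gap is in how you justify the kernel bounds.

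After rescaling, the operator is $((D'-A')\cdot\boldsigma)^2-z_j|x'|^{-1}+\ldots$ with $\hbar=1$. There is no longer any small parameter $h$, so your phrase ``a magnetic perturbation small in $h$'' is not correct: the rescaled potential $A'=\rho^{-1}A$ is \emph{not} known to be small in any pointwise sense—that is precisely what you are trying to prove. The only a priori control is the scale-invariant $L^2$ bound $\|\partial'A'\|_{L^2}\le C\kappa^{1/2}$ coming from (\ref{2-2}). Consequently you cannot treat the problem as a perturbation of pure hydrogen and import the eigenfunction estimate $|\psi_n|^2\le Cn^{-3}$; the spectrum and eigenfunctions of the magnetic operator are not close to the Coulomb ones without further input. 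Your ``hard part'' paragraph correctly flags this as the crux, but the mechanism you propose for it does not work.

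The route the paper intends (and which makes the proof genuinely ``standard'') is to rerun the estimates (\ref{2-11})--(\ref{2-15}) at $\hbar=1$. Those estimates never use eigenfunction pointwise bounds; they bound $\|\hbar D'u\|$ for $u=E(0)f$ using only $\|A'\|_{L^6}\le C\|\partial'A'\|_{L^2}\le C\kappa^{1/2}$, and from there obtain $\|\Delta'A'\|_\infty+\|\Delta'\partial'A'\|_\infty\le C\kappa$ directly. The dichotomy (\ref{2-14})/(\ref{2-15}) then gives $\|\partial'A'\|_\infty\le C\kappa+C\kappa^{1/2}\le C$, which unravels to $|\partial A|\le Ch^{-3}$. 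No hydrogen spectral analysis and no boundary maximum-principle step are needed; the global $L^2$ bound (\ref{2-2}) already does the work once $\hbar=1$ removes the semiclassical loss.
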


\begin{proof}
Proof is standard, based on rescaling (then $\hbar=1$) and equation (\ref{2-8}) for $A$. We leave details to the reader.
\end{proof}

Let us slightly improve estimate to $A$. We already know that 
$|\partial A (x)|\le C_0\beta $ with $\beta =\ell^{-\frac{3}{2}}$ and using a standard rescaling technique we conclude that 
\begin{equation}
|\Delta A|\le C\kappa \rho^2 \beta + C\kappa \rho^3 \ell^{-1}
\label{2-27}
\end{equation}
which does not exceed $C\kappa \ell^{-\frac{5}{2}}$ which implies 

\begin{proposition}\label{prop-2-6}
In our framework
\begin{enumerate}[label=(\roman*), fullwidth]
\item As $\ell (x)\ge h^2$
\begin{gather}
|A|\le C\kappa \ell^{-\frac{1}{2}},\qquad
|\partial A|\le C\kappa \ell ^{-\frac{3}{2}},\label{2-28}\\
|\partial A (x)-\partial A(y)|\le C_\theta\kappa \ell^{-\frac{3}{2}-\theta}|x-y|^\theta \qquad \text{as\ \ } |x-y|\le \frac{1}{2}\ell (x)
\label{2-29}
\end{gather}
for any $\theta \in (0,1)$
\item as $\ell (x)\le h^2$  these estimates hold with $\ell (x)$ replaced by $h^2$.
\end{enumerate} 
\end{proposition}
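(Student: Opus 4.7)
The plan is to bootstrap from the prior Proposition~\ref{prop-2-3} bound into the Euler--Lagrange equation (\ref{2-8}) for the minimizer and then use elliptic regularity for the Poisson equation. The informal discussion immediately preceding the statement already carries out the first half of this plan; I would make it precise as follows.

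First, take as input the preliminary pointwise estimate $|\partial A(x)| \le C_0 \beta$ with $\beta = \ell^{-\frac{3}{2}}$, combined with the standard local bound on the spectral density $e(x,y,\tau)$ and on $\hbar D_x e(x,y,\tau)|_{y=x}$ after rescaling the ball $B(x, \ell/2)$ to unit size with $\rho=\ell^{-\frac{1}{2}}$. Inserting this into the right-hand side of (\ref{2-8}) and undoing the rescaling gives inequality (\ref{2-27}), hence
\begin{equation*}
|\Delta A(x)| \le C\kappa \rho^2 \beta + C\kappa \rho^3 \ell^{-1} \le C\kappa \ell^{-\frac{5}{2}}\qquad \text{for } \ell(x)\ge h^2.
\end{equation*}
The explicit factor $\kappa$ here comes directly from $\kappa$ in front of the right-hand side of (\ref{2-8}).

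Second, I use that $A\in \sH^1_0(B(0,1))$, so $A$ can be represented via the Dirichlet Green's function $G$ on $B(0,1)$ as $A(x)=-\int G(x,y)\Delta A(y)\,dy$. Splitting into the local ball $B(x,\ell/2)$ and its complement: on the local ball the elementary bound $|G|\le C|x-y|^{-1}$ and $|\Delta A|\le C\kappa \ell^{-\frac{5}{2}}$ produce a contribution of size $C\kappa \ell^{-\frac{1}{2}}$; on the complement the integrability of $\ell(y)^{-\frac{5}{2}}$ away from the nuclei together with the decay of $G$ gives a uniformly bounded contribution. This yields $|A|\le C\kappa \ell^{-\frac{1}{2}}$. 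The estimate on $\partial A$ follows from the same representation with $\partial_x G$, and (\ref{2-29}) from standard interior $\sC^{1,\theta}$ Schauder estimates applied to $\Delta A=F$ with $\|F\|_\infty \le C\kappa \ell^{-\frac{5}{2}}$ on $B(x,\ell/2)$, after rescaling to a unit ball (the Hölder norm then scales by $\ell^{-\theta}$).

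For part (ii) I rescale by $\ell_*=h^2$ so that $\hbar\asymp 1$: the rescaled problem becomes classical at the unit scale, Proposition~\ref{prop-2-5} provides the starting pointwise bound $|\partial A|\le C h^{-3}$, and the same bootstrap through (\ref{2-8}) and the Dirichlet Green's function representation produces the analogous bounds with $\ell$ replaced by $h^2$.

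The main technical obstacle is the treatment of the far-field term in the Green's function representation when several Coulomb singularities $\x_j$ are present: one must verify that the contribution of the regions $\{\ell_j(y)\lesssim 1\}$ with $j$ different from the nucleus nearest $x$ does not deteriorate the estimate, which follows from the integrability of $\ell_j(y)^{-\frac{5}{2}}$ over three-dimensional neighborhoods of each nucleus combined with the lower bound $|x-y|\gtrsim 1$ on the distance between distinct Coulomb centers. Once this is in hand, the scaling arithmetic matches the asserted powers in (\ref{2-28})--(\ref{2-29}).
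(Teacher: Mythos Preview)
Your proposal is correct and follows precisely the approach the paper itself sketches: the paper's entire argument is the paragraph deriving (\ref{2-27}) followed by the words ``which implies'', and you have correctly supplied the missing elliptic-regularity step (Green's function representation plus interior Schauder) that turns the $|\Delta A|\le C\kappa\ell^{-5/2}$ bound into (\ref{2-28})--(\ref{2-29}). One small inaccuracy: in the far-field discussion you write ``$|x-y|\gtrsim 1$ on the distance between distinct Coulomb centers'', but the inter-nuclear separation is the parameter $a$, which may be $\ll 1$; the correct observation is that the contribution of a foreign nucleus at distance $\gtrsim a$ is $O(\kappa a^{-1/2})$, and since $\ell(x)\lesssim a$ when $x$ is near a given nucleus this is still $\le C\kappa\ell(x)^{-1/2}$.
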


Here in comparison with old estimates we replaced factor $\kappa^{\frac{1}{2}}$ by $\kappa$ which is an advantage.

Consider now zone $\{\ell \ge \max (a,1)\}$ and assume that 
\begin{equation}
\rho \le C\ell^{-\nu}\qquad \text{as\ \ } \ell \ge \frac{1}{2} 
\label{2-30}
\end{equation}
with $\nu >1$. Then if also $\beta =O(\ell^{-\nu_1})$ as $\ell\ge 1$ the right hand expression of (\ref{2-27}) does not exceed 
$C\kappa (\ell^{-3\nu-1}+\ell^{-\nu_1-2\nu})$ and therefore we almost upgrade estimate to $\beta$ to $O(\ell^{-3\nu}+\ell^{-\nu_1-2\nu+1})$ and repeating these arguments sufficiently many times to $O(\ell^{-3\nu})$. However, there are obstacles: first, as $\nu >1$ we get 
\begin{equation*}
A_j = \sum_m \upalpha_{j,m} |x-\x_m|^{-1} + O(\ell^{-1-\delta})
\end{equation*}
with constant $\upalpha_{j,m}$; however assumption $\nabla\cdot A=0$ implies $\upalpha_{j,m}=0$ and we pass this obstacle. The second obstacle 
\begin{equation*}
A_j = \sum_{k,m} \upalpha_{jk,m} (x_k-\x_{k,m})|x-\x_m|^{-3} +
O(\ell^{-2})
\end{equation*}
with constant $\upalpha_{jk,m}$ we cannot pass as assumption $\nabla\cdot A=0$ implies only that modulo gradient 
$A=\sum_m \upbeta_m\times \nabla \ell_m^{-1}$ with constant vectors $\upbeta_m$ and one cannot pass this obstacle.

Therefore we upgrade (\ref{2-28})--(\ref{2-29}) there:

\begin{proposition}\label{prop-2-7}
In our framework assume additionally that  \textup{(\ref{2-30})} holds. Then as $\nu>\frac{4}{3}$
\begin{gather}
|A|\le C\kappa \ell^{-2},\qquad
|\partial A|\le C\kappa \ell ^{-3},\label{2-31}\\
|\partial A (x)-\partial A(y)|\le C_\theta\kappa \ell^{-3-\theta}|x-y|^\theta \qquad \text{as\ \ } |x-y|\le \frac{1}{2}\ell (x)
\label{2-32}
\end{gather}
as $\ell(x)\ge 1$ (for all $\theta \in (0,1)$).
\end{proposition}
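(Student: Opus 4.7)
The plan is to iterate the elliptic regularity argument already used to prove Proposition~\ref{prop-2-6}, plugging progressively sharper decay estimates into the equation (\ref{2-8}) for the minimizer, and to use the Coulomb gauge condition $\nabla\cdot A=0$ to eliminate the first potentially-slow contribution that arises in the asymptotic expansion at infinity.

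First I would work in the exterior zone $\{\ell(x)\ge 1\}$ and rescale, as in the proof of Proposition~\ref{prop-2-6}, to balls of size $\asymp \ell$ around each point $x$ with $\ell(x)\gg 1$. Starting from the bound $|\partial A|\le C\kappa\ell^{-3/2}$ provided by Proposition~\ref{prop-2-6}, plug $\beta=C\ell^{-\nu_1}$ and $\rho\le C\ell^{-\nu}$ into (\ref{2-27}) to obtain
\begin{equation*}
|\Delta A|\le C\kappa\bigl(\ell^{-2\nu-\nu_1}+\ell^{-3\nu-1}\bigr).
\end{equation*}
Standard interior Schauder estimates on the rescaled ball of size $\asymp 1$ (applied to $A$ after subtracting a suitable harmonic comparison function to control the boundary values, and using the decay of $A$ at the previous step to bound the boundary contribution) give $|\partial A|\le C\kappa\ell^{-\nu_1^{\new}}$ with $\nu_1^{\new}=\min(2\nu+\nu_1-1,\,3\nu)$. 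Since $2\nu-1>0$ (in fact $>5/3$), each round strictly increases $\nu_1$, so after finitely many rounds we reach the regime where the decay of $A$ is limited only by harmonic ``tails''.

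The crucial step is the treatment of these harmonic tails. Writing $A=A_{\rm part}+A_{\rm harm}$ on $\{\ell\ge 1\}$, the source-term contribution $A_{\rm part}$ inherits the best available decay from the Newton potential applied to $\Delta A$, while $A_{\rm harm}$ must be expanded in the basis of decaying harmonic vector fields centered at the singularities $\x_m$. The monopole term $\sum_{m}\upalpha_{j,m}|x-\x_m|^{-1}$ (which would give only $|A|\sim\ell^{-1}$ and $|\partial A|\sim\ell^{-2}$) is ruled out by $\nabla\cdot A=0$: a direct computation shows $\nabla\cdot\bigl(\upalpha_{m}/|x-\x_m|\bigr)=-\upalpha_m\cdot(x-\x_m)/|x-\x_m|^3$, which vanishes only when $\upalpha_m=0$. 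This lets me iterate once more and push the decay to $|A|=O(\kappa\ell^{-2})$, $|\partial A|=O(\kappa\ell^{-3})$. The dipole-type term $\sum_m\upbeta_m\times\nabla\ell_m^{-1}$ is automatically divergence-free, so here the bootstrap terminates; this is precisely the obstacle mentioned in the paragraph preceding the proposition, and it is consistent with the decay rates being claimed.

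The Hölder estimate (\ref{2-32}) then follows from applying Schauder theory to the equation $\Delta A$ = (right-hand side of (\ref{2-8})) on the ball $B(x,\ell/2)$, using the bounds on $A$, $V$ and $\partial A$ already established to estimate the $\sC^\theta$ norm of the right-hand side by $C\kappa\ell^{-3-\theta}$ (up to the scaling factor). I expect the main technical difficulty to be a clean justification of the decomposition $A=A_{\rm part}+A_{\rm harm}$ at each stage: since we work only in the \emph{exterior} of the singular set rather than on all of $\bR^3$, the harmonic correction must be controlled using the already-known global bound $\|\partial A\|\le C\kappa^{1/2}$ to fix its ``boundary at infinity'', and the condition $\nu>4/3$ enters exactly to ensure that $A_{\rm part}=o(\ell^{-2})$ so that the monopole/dipole analysis of $A_{\rm harm}$ is meaningful.
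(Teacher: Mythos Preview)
Your proposal is correct and follows essentially the same approach as the paper: the paper's argument (given in the paragraphs immediately preceding the statement of the proposition) also iterates the bound (\ref{2-27}) to push the decay exponent of $|\partial A|$ toward $3\nu$, then identifies the monopole and dipole harmonic obstacles in the asymptotic expansion of $A$, uses $\nabla\cdot A=0$ to eliminate the monopole, and notes that the dipole term $\sum_m\upbeta_m\times\nabla\ell_m^{-1}$ cannot be removed and therefore fixes the final decay rates $|A|=O(\ell^{-2})$, $|\partial A|=O(\ell^{-3})$. Your explicit decomposition $A=A_{\rm part}+A_{\rm harm}$ and the Schauder step for (\ref{2-32}) are natural elaborations of details the paper leaves implicit.
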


\begin{remark}\label{rem-2-8}
\begin{enumerate}[label=(\roman*), fullwidth]
\item In application we are interested in $\nu=2$;

\item We cannot improve (\ref{2-31})--(\ref{2-32}) no matter how fast $\rho$ decays.
\end{enumerate}
\end{remark}

\chapter{Tauberian theory}
\label{sect-3}

Recall that the standard Tauberian theory results in the remainder estimate $O(h^{-2})$. Really, as the rescaled magnetic field intensity is no more than $C\kappa^{\frac{1}{2}}$, contribution of $B(x,\ell(x))$ to the Tauberian error does not exceed $C\rho^2 \times \hbar^{-1}= C\rho^3 \ell h^{-1}$ which as 
$\rho \asymp \ell^{-\frac{1}{2}}$ translates into $C\ell^{-\frac{1}{2}}h^{-1}$ and summation over $\{x: \ell(x)\ge \ell_*=h^2\}$ results in $Ch^{-2}$. On the other hand, contribution of $\{x: \ell(x)\le \ell_*=h^2\}$ into asymptotics does not exceed $C\hbar^{-3}\ell_*^{-1}= Ch^{-2}$ as $\hbar=1$.

However now we can unleash  arguments of \cite{ivrii:ground}.  Recall that we are looking at
\begin{equation}
\Tr (\psi H^-_{A,V}\psi)=  \Tr (\phi_1 H^-_{A,V}\phi_1) +
\Tr (\phi_2 H^-_{A,V}\phi_2)
\label{3-1}
\end{equation}
where $\psi^2=\phi_1^2+\phi_2^2$, $\supp \phi_1\subset \{x, |x|\le 2R\}$,
$\supp \phi_2\subset \{x, R\le |x|\le a\}$  and we compare it with the same expression calculated for $H_{A,V^0}$ with $V^0=z|x|^{-1}$. Here we assume that \begin{gather}
a \le 1, \qquad z\asymp 1
\label{3-2}\\
\shortintertext{and} 
|D^\alpha (V-V^0)|\le c_0 a^{-1} \ell^{-|\alpha|}\qquad \forall \alpha:|\alpha|\le 3.
\label{3-3}
\end{gather}
The latter assumption is too restrictive and could be weaken. Then 
\begin{multline}
\Tr \bigl(\phi_2 (H^-_{A,V} - H^-_{A,V^0})\phi_2\bigr)= \\
\int \bigl(\Weyl_1(x) -\Weyl_1^0(x)\bigr)\,\phi^2_2(x) \,dx + O(R^{-\frac{1}{2}}h^{-1})
\label{3-4}
\end{multline}
where $\Weyl^0_1$ and $\Weyl^0$ are calculated for operator with potential $V^0$. Really, we prove this for each operator $H_{A,V}$ and $H_{A,V^0}$ separately\footnote{\label{foot-2} Sure, such formula requires two-term expression but one can verify easily that the second term is $0$.}.

On the other hand, considering $V^\zeta= V^0 (1-\zeta) +V \zeta = V^0 + W\zeta$ and following \cite{ivrii:ground} we can rewrite the similar expression albeit for $\phi_2=1$ as 
\begin{equation}
\Tr \int_0^1 W\uptheta (-H_{A,V^\zeta})\, d\zeta
\label{3-5}
\end{equation}
and applying the semiclassical approximation (under temporary assumption that $W$ is supported in $\{x:\ |x|\le 4 R\}$) one can prove that as $\phi_1=1$
\begin{multline}
\Tr \bigl(\phi_1 (H^-_{A,V} - H^-_{A,V^0})\phi_1\bigr)= \\
\int \bigl(\Weyl_1(x) -\Weyl_1^0(x)\bigr)\,\phi^2_1(x) \,dx + O(a^{-1}Rh^{-2}).
\label{3-6}
\end{multline}
Really, contribution of ball $B(x,\ell(x))$ does not exceed 
$Ca ^{-1} \hbar^{-2}= Ca^{-1} \ell (x)h^{-2}$ and summation with respect to partition as $\ell (x)\le R$ returns $Ca^{-1}Rh^{-2})$; meanwhile contribution of $\{x: \ell(x)\le \ell_*\}$ does not exceed $Ca^{-1}\hbar^{-2}=Ca^{-1}$ as there $\hbar=1$.

One can get rid off the temporary assumption and take $\phi_1$ supported in $\{x:\ell(x)\le 2R\}$ instead. 

Therefore we arrive to
\begin{proposition}\label{prop-3-1}
Under assumption \textup{(\ref{3-3})}
\begin{multline} 
\Tr \bigl(\psi (H^-_{A,V} - H^-_{A,V^0})\psi\bigr)= \\
\int \bigl(\Weyl_1(x) -\Weyl_1^0(x)\bigr)\,\psi^2(x) \,dx + O\bigl(a^{-\frac{1}{3}}h^{-\frac{4}{3}}\bigr)
\label{3-7}
\end{multline}
\end{proposition}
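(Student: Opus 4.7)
The plan is to combine the two localized remainder estimates (3.4) and (3.6) through the partition $\psi^2 = \phi_1^2 + \phi_2^2$, and then to optimize in the splitting radius $R$. Since the Weyl integrals on the right-hand sides of (3.4) and (3.6), weighted respectively by $\phi_2^2$ and $\phi_1^2$, sum to the $\psi^2$-weighted integral appearing in (3.7), and since the trace on the left decomposes by linearity, the only remaining task is to bound the sum of the two remainders,
$$O\bigl(R^{-\frac{1}{2}}h^{-1}\bigr) + O\bigl(a^{-1}R h^{-2}\bigr).$$

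Next I would balance these two contributions in $R$: setting $R^{-\frac{1}{2}}h^{-1} \asymp a^{-1}R h^{-2}$ yields $R \asymp (ah)^{\frac{2}{3}}$, and with this choice both terms are of order $a^{-\frac{1}{3}}h^{-\frac{4}{3}}$, which is exactly (3.7). The choice of $R$ is admissible: one checks that $\ell_* = h^2 \le R \le a$ under the standing hypotheses $h^2 \le a \le 1$ (indeed $R \le a$ reduces to $h \le a^{\frac{1}{2}}$, and $R \ge h^2$ to $a \ge h^2$), so the two-piece partition genuinely covers the prescribed region.

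The main potential obstacle is bookkeeping rather than substance: the cutoffs $\phi_1,\phi_2$ must be chosen so as not to spoil (3.4) and (3.6) on the transition annulus $R \le |x| \le 2R$. What is needed is $|\partial^\alpha \phi_j| \le C_\alpha R^{-|\alpha|}$ for $|\alpha|$ up to the order demanded by the semiclassical approximations behind those two inputs; since both of them were derived for precisely this class of cutoffs (the derivation of (3.4) uses a two-term rescaled Weyl formula on balls of size $\asymp \ell$, while the derivation of (3.6) relies on the Feynman--Hellmann-type identity (3.5) controlled by a partition on the same scale), no further work is required. The proposition then reduces to the optimization above.
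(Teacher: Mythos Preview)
Your proposal is correct and follows essentially the same approach as the paper: combine the two localized estimates (3.4) and (3.6) via the partition $\psi^2=\phi_1^2+\phi_2^2$, then optimize the sum $R^{-\frac{1}{2}}h^{-1}+a^{-1}Rh^{-2}$ by choosing $R\asymp (ah)^{\frac{2}{3}}$, checking that $h^2\le R\le a$. The paper's proof is exactly this optimization, stated in one line.
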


Really,  $a^{-\frac{1}{3}}h^{-\frac{4}{3}}$ is 
$R^{-\frac{1}{2}}h^{-1}+ a^{-1}Rh^{-2}$ optimized by 
$R\asymp R_*\Def (ah)^{\frac{2}{3}}$; as $h^2 \le a$ we note that 
$h^2\le R_* \le a$. 

\begin{corollary}\label{cor-3-2}
\begin{enumerate}[label=(\roman*), fullwidth]
\item As $M=1$ equality \textup{(\ref{3-7})} remains valid with $\psi=1$ and 
$a= 1$.
\item As $M\ge 2$ and $a\ge h^2$ equality  \textup{(\ref{3-7})} becomes
\begin{multline} 
\Tr \bigl(\psi (H^-_{A,V} - H^-_{A,V^0})\psi\bigr)= \\
\int \bigl(\Weyl_1(x) -\Weyl_1^0(x)\bigr)\,\psi^2(x) \,dx + O\bigl((a^{-\frac{1}{3}}+1)h^{-\frac{4}{3}}\bigr)
\label{3-8}
\end{multline}
where we reset case $a\ge 1$ to $a=1$.
\end{enumerate}
\end{corollary}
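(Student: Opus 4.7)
For part (i), choose $V^0 = z_1/\ell_1$. Assumption~(\ref{1-6}) with $z_1\asymp 1$ and $(\ell_1+1)^{-1}\le 1$ gives $|D^\alpha W|\le C_\alpha \ell_1^{-|\alpha|}$, i.e.\ condition~(\ref{3-3}) with $a=1$ and $\ell = \ell_1$ holding throughout $\bR^3$. The construction in Proposition~\ref{prop-3-1} then runs with $\psi \equiv 1$: the decomposition $1 = \phi_1^2 + \phi_2^2$ (with $\phi_1$ localising near $\x_1$, $\phi_2$ on the complement) now covers all of $\bR^3$, both the Tauberian estimate and the semiclassical formula~(\ref{3-6}) apply uniformly, and the exterior integrals converge thanks to the Coulomb-or-better decay of $V$ at infinity, yielding the stated error $O(h^{-4/3})$ with $a=1$.

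For part (ii), set $V^0 = \sum_{j=1}^M z_j/\ell_j$ and let $a$ be a lower bound for the pairwise separations of the $\x_j$. Introduce a partition of unity
\begin{equation*}
\psi^2 = \chi_0^2 + \sum_{j=1}^M \chi_j^2
\end{equation*}
with $\chi_j$ ($j\ge 1$) supported in $B(\x_j,\epsilon a)$ for a small fixed $\epsilon$, and $\chi_0$ supported in $\{x:\ell(x)\ge \epsilon a/2\}$. Apply the method of Proposition~\ref{prop-3-1} separately to each piece $\Tr\bigl(\chi_\bullet(H^-_{A,V} - H^-_{A,V^0})\chi_\bullet\bigr)$. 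On $\supp\chi_j$ ($j\ge 1$) the competing singularities satisfy $\ell_k\ge a/2$ for $k\ne j$, so $|D^\alpha(V - z_j/\ell_j)|\le C_\alpha a^{-1}\ell_j^{-|\alpha|}$: this is exactly (\ref{3-3}) with parameter $a$ and the local Coulomb $z_j/\ell_j$ playing the role of $V^0$. The extra piece $V^0 - z_j/\ell_j = \sum_{k\ne j} z_k/\ell_k$ is smooth of size $O(a^{-1})$ on $\supp\chi_j$ and is absorbed into the remainder. Proposition~\ref{prop-3-1} thus contributes $O(a^{-1/3}h^{-4/3})$ per singular piece. On $\supp\chi_0$, $V-V^0 = W$ satisfies the bounds of~(\ref{3-3}) with effective $a=1$, so the same argument (now with no Coulomb singularity to resolve) yields $O(h^{-4/3})$ from the outer piece. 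Summing produces $O((a^{-1/3}+1)h^{-4/3})$, and the ``reset $a\ge 1$ to $a=1$'' clause merely reflects that $a^{-1/3}$ cannot improve past the outer-region bound.

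The main technical point is the compatibility of the partition with the approximation step~(\ref{3-5})--(\ref{3-6}): as in Proposition~\ref{prop-3-1}, one must interchange the $\zeta$-integration with the partition and control the overlap between $\chi_0$ and the $\chi_j$ in the transition annuli $\ell_j\asymp \epsilon a$, where neither the local Coulomb model $z_j/\ell_j$ nor the ``smooth'' treatment is sharp. This is standard once $\epsilon$ is fixed small enough so that the perturbation constants behave, and no ideas beyond those already used in Proposition~\ref{prop-3-1} are required.
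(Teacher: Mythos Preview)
Your part~(i) is fine and is what the paper intends.

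For part~(ii) you have misread the statement. In the paper $V^0$ remains the single Coulomb $z|x|^{-1}$ from the Chapter~\ref{sect-3} setup, and $\psi$ is still the cutoff around that one singularity. The corollary is then an immediate consequence of Proposition~\ref{prop-3-1}: one checks that on $\supp\psi$ condition~(\ref{3-3}) holds with parameter $\min(a,1)$, since the foreign Coulomb tails $\sum_{k\ne j}z_k/\ell_k$ satisfy (\ref{3-3}) with parameter $a$, while the background $W$ of (\ref{1-4})--(\ref{1-6}) only gives $|D^\alpha W|\le C\ell^{-|\alpha|}$, i.e.\ parameter $1$. Proposition~\ref{prop-3-1} with $\min(a,1)$ in place of $a$ then yields $O(\min(a,1)^{-1/3}h^{-4/3})=O((a^{-1/3}+1)h^{-4/3})$; this is precisely the ``reset $a\ge1$ to $a=1$'' clause. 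No partition of unity is needed.

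What you wrote instead --- redefining $V^0=\sum_j z_j/\ell_j$ and introducing a full partition $\chi_0^2+\sum_j\chi_j^2$ --- is not Corollary~\ref{cor-3-2} but the decoupling that the paper carries out separately in Section~\ref{sect-5-1}. Your near-nucleus pieces $\chi_j$ are handled correctly (indeed that step \emph{is} the corollary as the paper means it). However, your claim that the between-nuclei piece $\chi_0$ contributes $O(h^{-4/3})$ is not justified: Proposition~\ref{prop-3-1} does not apply on $\supp\chi_0$, since there is no singularity there and hence no optimization over $R$. The direct Tauberian estimate on $\supp\chi_0$ (compare (\ref{5-3}) and (\ref{5-10})) gives only $O(a^{-1/2}h^{-1})$; this is $\le C a^{-1/3}h^{-4/3}$ exactly when $a\ge h^2$, so your final conclusion survives, but the intermediate bound you state is incorrect.
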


\begin{remark}\label{rem-3-3}
One can apply much more advanced arguments of \cite{ivrii:MQT1} or section~\ref{book_new-sect-12-5} of \cite{futurebook}. Unfortunately using these arguments so far I was not able to improve the above results unless $\kappa\ll 1$. More precisely, I proved estimate $O(\kappa h^{-\delta-\frac{4}{3}} + h^{-1})$ as $a=1$ (or even $o(h^{-1})$ as $a\gg 1$, $\kappa=o(h^{\frac{1}{3}+\delta})$ and some assumptions of global nature are fulfilled). However as I still hope to improve these results, I am not including them here.
\end{remark}

\chapter{Single singularity}
\label{sect-4}

\section{Coulomb potential}
\label{sect-4-1}

Consider now exactly Coulomb potential: $V=z |x|^{-1}$. Then according to Theorem~\ref{EFS3-thm:scott} of \cite{EFS3} as $h=1$, $z=1$ and 
$0< \kappa \le \kappa^*$
\begin{multline}
\lim_{R\to\infty} \biggl( \inf_A \Bigl( \Tr^- \bigl( \phi_R H_A \phi_R\bigr) + \frac{1}{\kappa} \int |\partial A|^2 \,dx
\Bigr)\\
- \int  \Weyl_1 (x)\phi_R^2(x)\,dx \biggr) =: 2 z^2 S(z\kappa).
\label{4-1}
\end{multline}
which according to Lemma~\ref{EFS3-S=S} of \cite{EFS3} coincides with 
\begin{multline}
   \lim_{\eta\to 0^+} \biggl( \inf_A \Bigl( \Tr^- \bigl(  H_A  +\eta\bigr) + \frac{1}{\kappa}  \int |\partial A|^2 \,dx
\Bigr) \\
 -  \int \Weyl_1(H_A+\eta,x)\,dx \biggr) = 2z^2S(z\kappa).
 \label{4-2}
\end{multline}
Here $\phi \in \sC_0^\infty (B(0,1))$, $\phi=1$ in $B(0,\frac{1}{2})$, $\phi_R=\phi(x/R)$. Also due to scaling for $z>0$ one has a Scott coefficient $2z^2 S(\kappa z)$.

\begin{proposition}\label{prop-4-1}
 As $0<\kappa <\kappa'$
 \begin{equation}
 S(\kappa')\le S(\kappa) \le S(\kappa') + C \kappa '(\kappa^{-1}-\kappa'^{-1}).
 \label{4-3}
 \end{equation}
\end{proposition}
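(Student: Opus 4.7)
The strategy is to prove the two inequalities separately, using the variational characterization of $S(\kappa)$ from \textup{(\ref{4-1})} (or equivalently \textup{(\ref{4-2})}).

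\textbf{Lower bound (monotonicity).} Since $\kappa<\kappa'$ implies $\kappa^{-1}>\kappa'^{-1}$, for every admissible $A$ and every $R$ (resp.\ every $\eta>0$) one has
\begin{equation*}
\Tr^-(\phi_R H_A \phi_R)+\kappa'^{-1}\int|\partial A|^2\,dx\le \Tr^-(\phi_R H_A\phi_R)+\kappa^{-1}\int|\partial A|^2\,dx.
\end{equation*}
Taking the infimum over $A$, subtracting the common Weyl integral, and then passing to the limit $R\to\infty$ yields $2z^2 S(z\kappa')\le 2z^2S(z\kappa)$; setting $z=1$ (and rescaling if needed) gives $S(\kappa')\le S(\kappa)$.

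\textbf{Upper bound.} Fix a small $\varepsilon>0$ and let $A=A_{R,\kappa'}$ be an $\varepsilon$-near-minimizer of the $\kappa'$-problem at scale $R$. The key input is that Proposition~\ref{prop-2-1} (with $h=1$, $z=1$) applies to this near-minimizer uniformly in $R$, yielding $\|\partial A\|^2\le C\kappa'$. Plugging $A$ into the $\kappa$-functional and using the identity
\begin{equation*}
\kappa^{-1}\|\partial A\|^2=\kappa'^{-1}\|\partial A\|^2+(\kappa^{-1}-\kappa'^{-1})\|\partial A\|^2,
\end{equation*}
we obtain
\begin{equation*}
\cE_\kappa(A)\le \cE_{\kappa'}(A)+(\kappa^{-1}-\kappa'^{-1})\cdot C\kappa'.
\end{equation*}
Taking the infimum over $A$ on the left, subtracting the Weyl integral, and letting $R\to\infty$ (and $\varepsilon\to 0$) gives $S(\kappa)\le S(\kappa')+C\kappa'(\kappa^{-1}-\kappa'^{-1})$.

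\textbf{Main obstacle.} The only delicate point is justifying that the bound $\|\partial A_{R,\kappa'}\|^2\le C\kappa'$ from Proposition~\ref{prop-2-1} holds uniformly as $R\to\infty$ (or as $\eta\to 0^+$ in the alternative formulation \textup{(\ref{4-2})}), so that the error term on the right survives passage to the limit defining $S$. Inspecting the proof of Proposition~\ref{prop-2-1}, the only ingredients used are the Coulomb-type lower bound on $\Tr^-(\psi_\ell H_A\psi_\ell)$ near the singularity and the Weyl-type bound away from it, both of which are $R$-independent; hence the constant $C$ in the estimate is uniform in $R$. Once this is in place, both inequalities pass cleanly to the limit, and combining them yields \textup{(\ref{4-3})}.
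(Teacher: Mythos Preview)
Your proof is correct, but the route you take for the upper bound differs from the paper's. You bound $\|\partial A_{R,\kappa'}\|^2$ by invoking Proposition~\ref{prop-2-1} as an external a~priori estimate, then argue (correctly) that its proof adapts to the localized functional uniformly in $R$. The paper instead introduces a third parameter $\kappa''$ with $\kappa<\kappa'<\kappa''\le\kappa^*$ and compares the $\kappa'$-functional to the $\kappa''$-functional directly at the variational level: for $R$ large and $A$ an $\varepsilon$-near-minimizer for $\kappa'$, one gets
\[
(\kappa'^{-1}-\kappa''^{-1})\|\partial A\|^2\le |S(\kappa'')-S(\kappa')|+2\varepsilon,
\]
and then plugs this bound into the $\kappa$-functional. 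Setting $\kappa''=\kappa^*$ at the end gives \textup{(\ref{4-3})}.

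The paper's argument is entirely internal to the variational structure and sidesteps the uniformity-in-$R$ issue you flagged; it also produces the intermediate inequality \textup{(\ref{4-4})}, namely that the difference quotient $(\kappa^{-1}-\kappa'^{-1})^{-1}|S(\kappa)-S(\kappa')|$ is monotone, which is a slightly sharper statement than \textup{(\ref{4-3})} alone. Your approach is more direct and ties the constant $C$ explicitly to the constant in Proposition~\ref{prop-2-1}, at the cost of the extra verification you identify as the main obstacle.
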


\begin{proof}
Monotonicity of $S(\kappa)$ is obvious.

Let $0<\kappa <\kappa' <\kappa''\le \kappa^*$. Then for any $\varepsilon>0$ if $R=R_\varepsilon$ is large enough    then the left-hand expression in (\ref{4-1}) for $\kappa'$ (without $\inf$ and $\lim$) is greater than $S(\kappa'') -\varepsilon + (\kappa'^{-1}-\kappa''^{-1})\|\partial A\|^2$;   also, if $A$ is an almost minimizer there, it is less than $S(\kappa')+\varepsilon$. 

Therefore 
$(\kappa'^{-1}-\kappa''^{-1})\|\partial A\|^2\le |S(\kappa'') -S(\kappa') | +2\varepsilon$. But then 
\begin{multline*}
S(\kappa) -\varepsilon \le S(\kappa')+\varepsilon + (\kappa^{-1}-\kappa'^{-1})\|\partial A\|^2\le \\
S(\kappa')+\varepsilon + C (\kappa^{-1}-\kappa'^{-1})(\kappa'^{-1}-\kappa''^{-1})^{-1}\bigl(|S(\kappa'') -S(\kappa') | +2\varepsilon\bigr)
\end{multline*}
and therefore
\begin{equation}
(\kappa^{-1}-\kappa'^{-1}) ^{-1} |S(\kappa) - S(\kappa') |\le 
(\kappa'^{-1}-\kappa''^{-1})^{-1}|S(\kappa') -S(\kappa'') | 
\label{4-4}
\end{equation}
which for $\kappa''=\kappa^*$ implies (\ref{4-3}).
\end{proof}

\begin{remark}\label{rem-4-2}
Using global equation (\ref{2-8}) we conclude that as 
\begin{gather}
|\partial^\alpha  A| \le C\kappa \ell^{-1-|\alpha|}\qquad \ell\ge 1,\ |\alpha|\le 1,\label{4-5}\\
|\partial^\alpha  A| \le C\kappa \ell^{-\frac{1}{2}-|\alpha|}\qquad \ell\le 1,\ |\alpha|\le 1,\label{4-6}\\
\|\partial A\|^2 \le C\kappa^2.\label{4-7}
\shortintertext{Then}
 S'(\kappa) \le C, \qquad |S(\kappa (1+\eta))-S(\eta)|\le C\kappa \eta.\label{4-8}
\end{gather}
\end{remark}

\section{Main theorem}
\label{sect-4-2}

In the ``atomic'' case $M=1$ we arrive instantly to

\begin{theorem}\label{thm-4-3}
If $M=1$, $\kappa \le \kappa^*$ then
\begin{equation}
\E^* = \int \Weyl_1(x)\,dx + 2 z^2 S(z \kappa) h^{-2}+ O(h^{-\frac{4}{3}}).
\label{4-9}
\end{equation}
\end{theorem}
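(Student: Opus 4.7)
The plan is a direct reduction to the pure Coulomb problem via Corollary~\ref{cor-3-2}(i), followed by the scaling identification of the Coulomb infimum with (\ref{4-1})--(\ref{4-2}). Let $V^0 = z/\ell_1(x)$ be the Coulomb singular part (centred at the single nucleus). Pick a near-minimizer $A^*$ of $\E$ with potential $V$; by Propositions~\ref{prop-2-1} and \ref{prop-2-6} this field satisfies the uniform bounds required by the tauberian machinery of Section~\ref{sect-3}. Corollary~\ref{cor-3-2}(i) applied with $\psi = 1$ and $a = 1$ then gives
\begin{equation*}
\Tr^- H_{A^*,V} \;=\; \Tr^- H_{A^*,V^0} + \int \bigl(\Weyl_1(x)-\Weyl_1^0(x)\bigr)\,dx + O(h^{-4/3}).
\end{equation*}
Adding the magnetic energy $\kappa^{-1} h^{-2}\|\partial A^*\|^2$ to both sides and minimizing the right-hand side over $A$ produces
\begin{equation*}
\E^* \;\ge\; \E^*(V^0) + \int \bigl(\Weyl_1 - \Weyl_1^0\bigr)\,dx + O(h^{-4/3}),
\end{equation*}
where $\E^*(V^0)$ denotes the analogous infimum for the Coulomb potential.

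The reverse inequality is obtained by applying the same corollary to a near-minimizer $A^{**}$ of the Coulomb functional: the estimates of Section~\ref{sect-2} depend only on the Coulomb nature of the singularity and so extend verbatim (they use only (\ref{2-26}) with $\rho = \ell^{-1/2}$, which $V^0$ satisfies), giving the same $O(h^{-4/3})$ error. Thus
\begin{equation*}
\E^* \;=\; \E^*(V^0) + \int \bigl(\Weyl_1-\Weyl_1^0\bigr)\,dx + O(h^{-4/3}).
\end{equation*}
It remains to evaluate $\E^*(V^0)$. Rescaling $x \mapsto (h^2/z) y$ together with $A(x) \mapsto (z/h) \widehat A(y)$ converts the Coulomb problem at parameters $(h, z, \kappa)$ into the normalised problem at $(1, 1, z\kappa)$; both $\Tr^-$ and the magnetic energy pick up a common prefactor $z^2 h^{-2}$, while the effective coupling becomes $z\kappa$. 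Invoking (\ref{4-1})--(\ref{4-2}) on the normalised problem, one finds $\widehat \E^* = \int \widehat\Weyl_1^0\,dy + 2 S(z\kappa)$, and transforming back yields $\E^*(V^0) = \int \Weyl_1^0(x)\,dx + 2 z^2 S(z\kappa) h^{-2}$. Substituting gives the claimed asymptotics.

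The main obstacle is the divergence of $\int \Weyl_1$ and $\int \Weyl_1^0$ at infinity (see Remark~\ref{rem-2-2}): individually these integrals are infinite, but the identities above involve only their difference, which is the quantity actually controlled by Corollary~\ref{cor-3-2}. Every step must therefore be interpreted via the $\eta$-regularization of Remark~\ref{rem-2-2} (or an equivalent cutoff), so that the divergent parts cancel consistently between the $V$- and $V^0$-quantities. A secondary technical point is verifying that the Coulomb near-minimizer $A^{**}$ admits the Section~\ref{sect-2} bounds uniformly in $R$ (the radius of its support); this is automatic since those proofs never used the regular part $W$ of $V$.
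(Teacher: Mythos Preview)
Your argument is correct and follows essentially the same route as the paper: apply Corollary~\ref{cor-3-2}(i) to transfer between $V$ and $V^0$, plug in near-minimizers for each potential to get the two-sided inequality, note the regularization caveat, and then identify the Coulomb infimum via (\ref{4-1})--(\ref{4-2}) and scaling. The paper's proof is simply a terser rendering of the same steps; your explicit scaling computation and the remark that the Section~\ref{sect-2} bounds apply equally to the Coulomb near-minimizer are details the paper leaves implicit.
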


\begin{proof}
If $A$ satisfies minimizer properties  then in virtue of corollary~\ref{cor-3-2} 
\begin{multline}
\Tr^-  H_{A,V} -\int \Weyl_1 (x)\,dx \equiv  
\Tr^-  H_{A,V^0} -\int \Weyl^0_1 (x)\,dx \\
\mod O(h^{-\frac{4}{3}})
\label{4-10}
\end{multline}
and adding magnetic energy and plugging either minimizer for $V$ or for $V^0$ we get
\begin{align}
&\inf_A \Bigl(\Tr^-  H_{A,V} -\int \Weyl_1 (x)\,dx +
\frac{1}{\kappa h^2}\int |\partial A|^2\,dx\Bigr)
\lesseqgtr  \label{4-11}\\
&\inf_A \Bigl(\Tr^-  H_{A,V^0} -\int \Weyl^0_1 (x)\,dx +
\frac{1}{\kappa h^2}\int |\partial A|^2\,dx\Bigr)
\pm  Ch^{-\frac{4}{3}}.
\notag
\end{align}
Sure as $V$ (and surely $V^0$) are not sufficiently fast decaying at infinity the left (and for sure the right hand) expression in (\ref{4-10}) should be regularized as in section~\ref{sect-4}. However for potential decaying fast enough (faster than $|x|^{-2-\delta}$) regularization is not needed.

For $V^0$ we have an exact expression which concludes the proof.   
\end{proof}

\chapter{Several singularities}
\label{sect-5}

Consider now ``molecular'' case $M\ge 1$. Then we need more delicate arguments.
 
\section{Decoupling of singularities}
\label{sect-5-1}

Consider partition of unity $1=\sum_{0\le j\le m} \psi_j^2$ where $\psi_j$ is supported in $\frac{1}{3}a$-vicinity of $\x_j$ as $j=1,\ldots, m$ and $\psi_0=0$ in $\frac{1}{4}a$-vicinities of $\x_j$ (``near-nuclei'' and ``between-nuclei''partition elements).

\subsection{Estimate from above}
\label{sect-5-1-1}

Then
\begin{equation}
\Tr H^-_{A,V} =\sum_{0\le j\le m} \Tr  (\psi_j H^-_{A,V} \psi_j)
\label{5-1}
\end{equation}
and to estimate $\E^*$ from the above we impose an extra condition to $A$:
\begin{equation}
A=0 \qquad \text{as \ \ } \ell (x) \ge \frac{1}{5}a.
\label{5-2}
\end{equation}
Then in this framework we estimate 
\begin{equation}
|\Tr^- (\psi_0 H^-_{A,V} \psi_0)-\int \Weyl_1(x)\psi_0^2 (x)\,dx|\le
C h^{-1} a^{-\frac{1}{2}}.
\label{5-3}
\end{equation}
Proof is trivial by using $\ell$-admissible partition and applying results of the theory without any magnetic field.

So, to estimate $\E^*$ from above\footnote{\label{foot-3} Modulo error in (\ref{3-8}).} we just need to estimate from  above minimum with respect to $A$ satisfying (\ref{5-2}) of expression 
\begin{equation}
\Tr (\psi_j H^-_{A,V} \psi_j)-\int \Weyl_1(x)\psi_j^2 (x)\,dx + 
\frac{1}{\kappa h^2}  \int_{\{\ell_j(x)\le \frac{1}{5}a\}}  |\partial A|^2\,dx.
\label{5-4}
\end{equation}

\subsection{Estimate from below}
\label{sect-5-1-2}

In this case we use the same partition of unity $\{\psi_j^2\}_{j=0,1,\ldots, m}$ and estimate
\begin{gather}
\Tr H^- _{A,V}\ge \sum_{0\le j\le m} \Tr^- (\psi_j H_{A,V'} \psi_j)
\label{5-5}\\
\shortintertext{with}
V'= V+ 2h^2 \sum _j  (\partial \psi )^2
\label{5-6}\\
\shortintertext{and we also use decomposition} 
\int |\partial A|^2\,dx = \sum_{0\le j\le m} \int \omega_j^2 |\partial A|^2\, dx
\label{5-7}
\end{gather}
with
\begin{multline}
\omega_j(x) =1 \quad \text{as\ \ } \ell_j(x)\le \frac{1}{10}a,\qquad
\omega_j (x) \ge 1-C\varsigma  \quad \text{as\ \ } \ell_j (x)\le \frac{1}{2}a
\\ j=1,\ldots, m,
\label{5-8}
\end{multline}
\begin{equation}
\omega_0 \ge \epsilon_0 \varsigma\quad \text{as\ \ } \ell (x)\ge \frac{1}{5}a.
\label{5-9}
\end{equation}
So far $\varsigma>0$ is a constant but later it will be a small parameter.
Then since
\begin{multline}
\Tr^- (\psi_0 H_{A,V'} \psi_0)-\int \Weyl_1(x)\psi_0^2 (x)\,dx +
\frac{1}{\kappa h^2} \int \omega_0^2 |\partial A|^2\,dx\ge \\Ch^{-1}a^{-\frac{1}{2}}
\label{5-10}
\end{multline}
(again proven by partition) in virtue of \cite{MQT10} we are left with the estimates from below for
\begin{equation}
\Tr^- (\psi_j H_{A,V'} \psi_j)-\int \Weyl_1(x)\psi_j^2 (x)\,dx +
\frac{1}{\kappa h^2} \int \omega_j^2 |\partial A|^2\,dx.
\label{5-11}
\end{equation}

\begin{remark}\label{rem-5-1}
\begin{enumerate}[label=(\roman*), fullwidth]
\item 
Note that the error in $\Weyl_1$ when we replace $V'$ there by $V$ does not exceed  $Ch^{-1}(1+a^{-\frac{1}{2}})$ which is  less than error in (\ref{3-8}). Here we can also assume that $A$ satisfies (\ref{5-2}); we need just to replace $\varsigma$ by $\epsilon_0\varsigma$  in (\ref{5-8})--(\ref{5-9}). 

\item 
We also can further go down by replacing $\Tr^- (\psi_j H_{A,V'} \psi_j)$ by 
$\Tr  (\psi_j H^-_{A,V'} \psi_j)$.

\item
Therefore we basically have the same object for both estimates albeit with marginally different potentials ($V$ in the estimate from above and $V'$ in the estimate from below)  and with a weight $\omega_j^2$ satisfying (\ref{5-8})--(\ref{5-9}); in both cases $\omega =1$ as 
$\ell(x)\le \frac{1}{10}a$ but in the estimate from above $\omega (x)$ grows to $C_0$ and in the estimate from below $\omega(x)$ decays to $\varsigma$ as $\ell(x)\ge \frac{1}{3}a$ and in both cases condition (\ref{5-2}) could be imposed or skipped.

\item 
From now on we consider a single singularity at $0$ and we skip index $j$. However if there was a single singularity from the beginning, all arguments of this and forthcoming subsections would be unnecessary.
\end{enumerate}
\end{remark}

\subsection{Scaling}
\label{sect-5-1-3}

\begin{enumerate}[fullwidth, label =(\roman*)]
\item We are done as $z\asymp 1$ but as $z \ll 1$\,\footnote{\label{foot-4} As $z$ denotes $z_j$ we assume only that $z_1+\ldots+z_M\asymp 1$.} we need a bit more fixing. The problem is that $V \asymp z \ell ^{-1}$ only as $|x|\le z a$; otherwise $V\lesssim a^{-1}$ (where we assume that $a\le 1$). To deal with this we apply in the zone $\{x:\, z a \le |x|\le a\}$ the same procedure as before and its contribution to the error will be  $Ch^{-1} a^{-\frac{1}{2}}$ as $\rho = a^{-\frac{1}{2}}$ here. Actually we also need to keep $|x|\ge z^{-1} h^2$; so we assume that $z^{-1}h^2\le z a$ i.e.  $z  \ge a^{-\frac{1}{2}} h$.   

Now scaling $x\mapsto x'=x/ z a$, multiplying $H_{a,V}$ by $a$ (and therefore also multiplying $A$ by $a^{\frac{1}{2}}$, so $A\mapsto A'=a^{\frac{1}{2}}A$, $h\mapsto h'=h a^{-\frac{1}{2}} z^{-1}$; then the  magnetic energy becomes 
$\kappa^{-1}h^{-2}z\int \omega (x)^2 |\partial' A'|^2\, dx'$ where factors $a^{-1}$ and $az$ come from substitution $A=a^{-\frac{1}{2}}A'$ and scaling respectively. We need to multiply it by $a$ (as we multiplied an operator); plugging $h^{-2}=h'^{-2} a^{-1} z^{-2}$ we get the same expression as before but with $z'=1$, $a'=1$ and $h'=h a^{-\frac{1}{2}} z^{-1}\le 1$ and 
$\kappa'=\kappa z$ instead of $h$ and $\kappa$. If we establish here an error 
$O(h'^{-\frac{4}{3}})$ the final error will be $O(a^{-1} h'^{-\frac{4}{3}})= O(a^{-\frac{1}{3}}h^{-\frac{4}{3}}z^{\frac{4}{3}})$.

\item
 On the other hand, let $z  \le a^{-\frac{1}{2}} h$. Recall, we assume that $a\ge C_0 h^2$. Then  we can apply the same arguments as before but with $\bar{z}=a^{-\frac{1}{2}} h$ and we arrive to the same situation as before albeit with $h'=1$, $a'=1$, $\kappa'= \kappa a^{-\frac{1}{2}} h$ and with 
$z'= z/\bar{z}$. Then we have the trivial error estimate  $O(a^{-1})=O(a^{-\frac{1}{3}}h^{-\frac{4}{3}})$.
\end{enumerate}

\section{Main results}
\label{sect-5-2}

Combining results of the previous subsections with proposition~\ref{prop-2-7} we arrive to

\begin{theorem}\label{thm-5-2}
If $M\ge 2$, $\kappa \le \kappa^*$ and \textup{(\ref{2-30})} holds with $\nu>\frac{4}{3}$ then 
\begin{gather}
\E^* = \int \Weyl_1(x)\,dx + 2 \sum_j z_j^2 S(z_j \kappa) h^{-2}+ O(R_1+R_2)
\label{5-12}\\
\shortintertext{with}
R_1 =\left\{\begin{aligned}
&h^{-\frac{4}{3}}\qquad &&a\ge 1\\
&a^{-\frac{1}{3}}h^{-\frac{4}{3}}\qquad &&h^2\le a\le 1
\end{aligned}\right.\label{5-13}\\
\shortintertext{and}
R_2=\kappa h^{-2}
\left\{\begin{aligned}
&a^{-3}\qquad && a\ge |\log h|^{\frac{1}{3}},\\
&|\log h^2/a|^{-1} \qquad && h^2\le a \le |\log h|^{\frac{1}{3}}
\end{aligned}\right..
\label{5-14}
\end{gather}
\end{theorem}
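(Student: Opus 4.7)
The plan is to assemble the theorem from four ingredients already in place: the decoupling of singularities via partitions of unity in \S\ref{sect-5-1}, the scaling reduction of \S\ref{sect-5-1-3}, the atomic Scott expansion of Theorem~\ref{thm-4-3}, and the sharper minimizer decay of Proposition~\ref{prop-2-7}. The error $R_1$ comes from the single-singularity step, while $R_2$ comes from the mismatch between the upper- and lower-bound trial classes for~$A$ in the between-nuclei region.

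First I would set up the decoupling exactly as in \S\ref{sect-5-1}: an IMS partition $\{\psi_j\}_{j=0}^{M}$ together with weights $\{\omega_j\}_{j=0}^{M}$ satisfying \eqref{5-8}--\eqref{5-9}. For the upper estimate I would restrict attention to trial potentials $A$ obeying \eqref{5-2}, so that the between-nuclei contribution reduces by \eqref{5-3} to a pure Weyl expression with error $O(h^{-1}a^{-1/2})$ which is absorbed into $R_1$. For the lower estimate I would combine \eqref{5-5}--\eqref{5-6} with the weighted splitting \eqref{5-7} and dispose of the $\psi_0$-piece through \eqref{5-10}. In both directions the reduced object is an $M$-fold sum of single-singularity expressions of the form \eqref{5-4} or \eqref{5-11}.

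Next I would apply the scaling of \S\ref{sect-5-1-3} to each $j$. When $z_j \ge a^{-1/2}h$, rescale $x\mapsto x/(z_j a)$ to arrive at a unit-scale problem with a single Coulomb-like singularity, new semiclassical parameter $h'=ha^{-1/2}z_j^{-1}\le 1$ and new coupling $\kappa'=\kappa z_j$. Corollary~\ref{cor-3-2}(ii) converts the Coulomb-like $V$ to the exact Coulomb $V^0=z_j/\ell_j$ up to $O(h'^{-4/3})$, and the argument behind Theorem~\ref{thm-4-3} extracts the Scott coefficient $2z_j^2 S(z_j\kappa)$. Unscaling, the remainder becomes $O(a^{-1/3}h^{-4/3}z_j^{4/3})$, absorbed in $R_1$. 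The opposite regime $z_j\le a^{-1/2}h$ is handled via the auxiliary charge $\bar z=a^{-1/2}h$ and produces only a trivial $O(a^{-1})$ error, also inside $R_1$.

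The main obstacle is the matching of the two bounds, which is where $R_2$ appears. The upper-bound class forces $A$ to vanish outside near-nucleus balls, but a true minimizer carries magnetic energy into the between-nuclei region. Under \eqref{2-30} with $\nu>\frac{4}{3}$, Proposition~\ref{prop-2-7} yields $|\partial A|\le C\kappa\ell^{-3}$ on $\{\ell\ge 1\}$, so for $a\ge|\log h|^{1/3}$ the deficit is
\begin{equation*}
\frac{1}{\kappa h^2}\int_{\ell(x)\ge a/5} |\partial A|^2\,dx \le \frac{C}{\kappa h^2}\kappa^2 \int_{a/5}^{\infty}\ell^{-6}\,\ell^2\,d\ell \le C\kappa h^{-2} a^{-3},
\end{equation*}
matching the first branch of $R_2$. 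For the intermediate range $h^2\le a\le |\log h|^{1/3}$ only the weaker bound of Proposition~\ref{prop-2-6} is available, giving a logarithmically divergent integral; here I would perform a delicate balance in the auxiliary parameter $\varsigma$ from \eqref{5-8}--\eqref{5-9}, in which the factor $1-C\varsigma$ lost on the near-nucleus magnetic energy competes with a $\varsigma^{-2}$ penalty on the between-nuclei part. Optimizing $\varsigma$ against $\log(a/h^2)$ produces the reciprocal-logarithmic factor $|\log(h^2/a)|^{-1}$ in the second branch of $R_2$, completing the proof.
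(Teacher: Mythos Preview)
Your outline for $R_1$ and for the large-$a$ branch of $R_2$ is essentially the paper's: decoupling via \S\ref{sect-5-1}, scaling via \S\ref{sect-5-1-3}, and Theorem~\ref{thm-4-3} produce $R_1$, while Proposition~\ref{prop-2-7} gives the $a^{-3}$ factor by integrating $|\partial A|^2\le C\kappa^2\ell^{-6}$ over $\{\ell\gtrsim a\}$.

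The gap is in the small-$a$ branch of $R_2$. Your $\varsigma$-optimization cannot manufacture the factor $|\log(h^2/a)|^{-1}$: balancing a loss of order $\varsigma\cdot\kappa h^{-2}$ against a $\varsigma^{-2}$-penalized between-nuclei term is a polynomial trade-off in $\varsigma$ and never yields a logarithm. Nor does integrating the pointwise bound of Proposition~\ref{prop-2-6} help, since $\int_{h^2}^{a}\kappa^2\ell^{-3}\cdot\ell^2\,d\ell\asymp\kappa^2\log(a/h^2)$ is \emph{worse} than the global energy bound, not better.

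The paper's mechanism is different and uses a degree of freedom you have suppressed. It reduces the matching error to \eqref{5-15}: a bound on $\kappa^{-1}h^{-2}\|\partial A\|^2_{\{b\le\ell(x)\le 2b\}}$ for a single dyadic shell, where the cut-off scale $b$ is \emph{not} fixed at $a$ but is free to range over $[R_*,a]$. One then invokes the global bound $\|\partial A\|^2=O(\kappa^2)$ coming from \eqref{5-16} (cf.\ \eqref{4-7} in Remark~\ref{rem-4-2}) and applies pigeonhole: since the $\asymp\log(a/R_*)\asymp\log(a/h^2)$ dyadic shells in $[R_*,a]$ together carry total energy $O(\kappa^2)$, at least one shell carries at most $C\kappa^2|\log(a/h^2)|^{-1}$, so $\kappa^{-1}h^{-2}$ times that shell's energy is $\le C\kappa h^{-2}|\log(h^2/a)|^{-1}$. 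Making the decoupling cut at that particular $b$ gives the second line of \eqref{5-14}. The freedom to slide the cut-off, together with the global $\sL^2$ control of $\partial A$, is the missing idea.
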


\begin{proof}
To prove theorem we need to prove an estimate 
\begin{equation}
\frac{1}{\kappa h^2}\|\partial A \|^2 _{\{b\le \ell(x)\le 2b\}} 
\le CR_2
\label{5-15}
\end{equation}
where $R_*\le b\le a$ is a ``cut-off''. On the other hand we know that 
\begin{equation}
\frac{1}{\kappa^2 h^2}\|\partial A\|^2=-\frac{\partial S}{\partial \kappa}=O(1)
\label{5-16}
\end{equation}
and we need to recover the last factor in the definition of $R_2$.

As $a\ge 1$ we can have $a^{-3}$ because in virtue of (\ref{2-31}) the square of the partial norm in (\ref{5-16}) does not exceed $Ca^{-3}\kappa^2$. 

On the other hand, as $h^2\le R_* \le a$ we can select $b: R_*\le b\le a$ such that  he partial norm in (\ref{5-16}) does not exceed 
$C|\log (a/h^2) |^{-1}\cdot \|\partial A\|^2$. 
\end{proof}

\begin{remark}\label{rem-5-3}
\begin{enumerate}[label=(\roman*), fullwidth]
\item As $a\le |\log h|$  we do not need assumption (\ref{2-30});

\item In particular, as $a\ge 1$ and $\kappa \le a^3 h^{\frac{2}{3}}$ remainder estimate is $O(h^{-\frac{4}{3}})$.
\end{enumerate}
\end{remark}

\section{Problems and remarks}
\label{sect-5-3}

\begin{Problem}\label{Problem-5-4}
\begin{enumerate}[label=(\roman*), fullwidth]
\item As $\kappa \in [0, \kappa^*]$ with small enough $\kappa^*$ does $S(\kappa)$ really depend on $\kappa$ or $S(\kappa)=S(0)$? 
\item If $S(\kappa)$ really depends on $\kappa$, what is asymptotic behavior of $S(\kappa)-S(0)$ as $\kappa\to +0$: can one improve $S(\kappa)-S(0)=O(\kappa)$? 
\end{enumerate}
\end{Problem}
Any estimate better than $O(\kappa)$ would improve (with respect to $\kappa$) remainder estimates in theorems~\ref{thm-4-3} and~\ref{thm-5-2}.

\begin{Problem}\label{Problem-5-5}
Improve (as $a\ge 1$) estimates in theorem~\ref{thm-4-3} and~\ref{thm-5-2}   to those achieved in section~\ref{book_new-sect-12-5} of \cite{futurebook} for  $\kappa=0$ (i.e. without self-generated magnetic field). Namely there we were able to achieve $O(h^{-1})$ or even better, up to $O(h^{-1+\delta})$\,\footnote{\label{foot-5} Under global condition to Hamiltonian flow.}.

\begin{enumerate}[label=(\roman*), fullwidth]
\item The best outcome would be the same estimate $O(h^{-1})$ (or better~\footref{foot-5}) for all $\kappa \in [0,\kappa^*]$. 
\item Alternatively, we would like to see estimate 
$O( h^{-1}+\kappa^\mu h^{-\frac{4}{3}})$; in particular we would get estimate $O(1)$ for $\kappa =O(h^{2/(3\mu)})$  with exponent $\mu $ as large as possible.
\end{enumerate}
\end{Problem}

\bibliographystyle{alpha}



\end{document}